\newtheorem{theorem}{Theorem}%[section]
\newtheorem{corollary}[theorem]{Corollary}
\newtheorem{lemma}[theorem]{Lemma}
\newtheorem{proposition}[theorem]{Proposition}
\numberwithin{equation}{section}
\title{On the size of sets avoiding a general structure}
\author{Runze Wang}
\address[]{Department of Mathematical Sciences, University of Memphis, Memphis, TN 38152, USA}
\email{runze.w@hotmail.com}
\thanks{}
\date{\today}
\begin{document}

\sloppy

\begin{abstract}
    Given a finite abelian group $G$ and a subset $S\subseteq G$, we let $N_{G,\ S}$ be the smallest integer $N$ such that for any subset $A\subseteq G$ with $N$ elements, we have $g+S\subseteq A$ for some $g\in G$. Using the probabilistic method, we prove that 
    \begin{align*}
        \frac{|H_G(S)|-1}{|H_G(S)|}|G|+\Biggl\lceil\biggl(\frac{|G|}{|H_G(S)|}\biggr)^{1-|H_G(S)|/|S|}\Biggr\rceil\le N_{G,\ S}\le \biggl\lfloor\frac{|S|-1}{|S|}|G|\biggr\rfloor+1,
    \end{align*}
    where $H_G(S)$ is the stabilizer of $S$.
\end{abstract}
%\keywords{Abelian group; Avoiding a structure; Probabilistic method}

\maketitle
Problems about avoiding structures, especially avoiding arithmetic progressions, are well-known and have been extensively studied. For example, the famous Roth's theorem, which is about avoiding three-term arithmetic progressions, was proved in \cite{Ro} and has been refined in \cite{Bo,HB,Sa,Sz}. In this succinct paper, we take the avoided structure to be a general set.

For a finite abelian group $(G,+)$, an element $g\in G$, and a subset $S\subseteq G$, we define $g+S$ to be $\{g+s:s\in S\}$, and define the \textit{stabilizer} of $S$ to be 
\begin{align*}
    H_G(S)=\{g'\in G:\ g'+S=S\}.
\end{align*}
It is easy to check that $H_G(S)$ is a subgroup of $G$, and $S$ is the union of some cosets of $H_G(S)$.

Given a finite abelian group $G$ and a subset $S\subseteq G$, we let $N_{G,\ S}$ denote the smallest integer $N\ge |S|$ such that for any subset $A\subseteq G$ with $N$ elements, we have $g+S\subseteq A$ for some $g\in G$. Thus, for any $M\le N_{G,\ S}-1$, there exists a subset $B\subseteq G$ with $M$ elements, such that $g+S\nsubseteq B$ for any $g\in G$. Roughly speaking, this means the additive structure of $S$ is avoided in $B$.

Firstly we prove the following bounds on $N_{G,\ S}$, and the lower bound will be improved later.

\begin{theorem}\label{thm1}
    We have
    \begin{align*}
        \frac{|H_G(S)|-1}{|H_G(S)|}|G|+1\le N_{G,\ S}\le \biggl\lfloor\frac{|S|-1}{|S|}|G|\biggr\rfloor+1.
    \end{align*}
\end{theorem}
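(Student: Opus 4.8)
The plan is to establish the two inequalities separately. Throughout, write $H=H_G(S)$, and set $h=|H|$, $n=|G|$, $s=|S|$; as usual we take $S\ne\emptyset$. Since $S$ is a union of cosets of the subgroup $H$, and $H$ is itself a subgroup of $G$, we have both $h\mid s$ and $h\mid n$, so in particular $n/h$ and $\tfrac{h-1}{h}\,n=n-n/h$ are integers and $s/h\ge 1$.

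For the upper bound I would run a first-moment argument over the $|G|$ translates of $S$. Fix $A\subseteq G$ with $|A|=N:=\bigl\lfloor\tfrac{s-1}{s}n\bigr\rfloor+1$; note that $N>\tfrac{s-1}{s}n$ and $N\ge s$, so both the claimed bound and the definition of $N_{G,S}$ are consistent. Count the pairs $(g,x)$ with $g\in G$, $x\in G\setminus A$, and $x\in g+S$. For each fixed $x$ the condition $x\in g+S$ means $g\in x-S$, giving exactly $s$ choices of $g$, so the number of such pairs equals $s\,(n-N)$. From $N>\tfrac{s-1}{s}n$ we get $n-N<n/s$, hence $s\,(n-N)<n$; thus some $g_0\in G$ appears in no such pair, i.e.\ $(g_0+S)\cap(G\setminus A)=\emptyset$, i.e.\ $g_0+S\subseteq A$. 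This proves $N_{G,S}\le N$.

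For the lower bound I would exhibit a single extremal "avoiding" set. Pick a transversal $T$ of the cosets of $H$ in $G$, so $|T|=n/h$, and put $B=G\setminus T$, which has $M:=n-n/h=\tfrac{h-1}{h}n$ elements. For every $g\in G$ the translate $g+S$ is again a nonempty union of cosets of $H$, so it contains at least one representative from $T$; thus $(g+S)\cap T\ne\emptyset$, equivalently $g+S\not\subseteq B$. Hence $B$ contains no translate of $S$. Since any subset of $G$ of size at least $N_{G,S}$ contains a translate of $S$, and since a superset of a set containing some translate still contains that translate, the existence of a bad set $B$ of size $M$ forces $N_{G,S}\ge M+1=\tfrac{h-1}{h}n+1$.

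I do not expect a genuine obstacle in either direction: the upper bound is a clean double count, and the lower bound is an explicit construction that uses exactly the defining feature of the stabilizer, namely that $g+S$ is a union of $H$-cosets for every $g$, so it must meet every coset transversal. The only points needing a little care are the divisibility remarks that make $\tfrac{h-1}{h}n$ an integer, the elementary inequality $\bigl\lfloor\tfrac{s-1}{s}n\bigr\rfloor+1>\tfrac{s-1}{s}n$, and the monotonicity step converting ``a bad set of size $M$ exists'' into ``$N_{G,S}>M$''.
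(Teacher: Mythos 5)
Your proposal is correct and follows essentially the same route as the paper: the lower bound via the explicit set obtained by deleting one element from each coset of $H_G(S)$ (your $B=G\setminus T$), and the upper bound via the same count of incidences between translates $g+S$ and points of $G\setminus A$, which the paper phrases as a union-bound contradiction. Your added remarks on divisibility, $N\ge|S|$, and the monotonicity step are fine and only make explicit what the paper leaves implicit.
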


\begin{proof}
    For the lower bound, we can construct a subset $B\subseteq G$ with $\frac{|H_G(S)|-1}{|H_G(S)|}|G|$ elements by excluding one element from each coset of $H_G(S)$, then we will have $g+S\nsubseteq B$ for any $g\in G$.

    For the upper bound, let us assume for some subset $A\subseteq G$ with $\Bigl\lfloor\frac{|S|-1}{|S|}|G|\Bigr\rfloor+1$ elements, we have $g+S\nsubseteq A$ for any $g\in G$, which means $(g+S)\cap (G\setminus A)\neq \emptyset$ for any $g\in G$. For each $\alpha\in G\setminus A$, we have
    \begin{align*}
        |\{g\in G:\ \alpha\in g+S\}|=|\{g\in G:\ g\in \alpha-S\}|=|\alpha-S|=|S|,
    \end{align*}
    which implies
    \begin{align*}
        |\{g\in G:\ (g+S)\cap (G\setminus A)\neq \emptyset\}|&\le \sum_{\alpha\in G\setminus A} |\{g\in G:\ \alpha\in g+S\}| \\
        &=|G\setminus A||S| \\
        &=\biggl(|G|-\biggl(\biggl\lfloor\frac{|S|-1}{|S|}|G|\biggr\rfloor+1\biggr)\biggr)|S| \\
        &<|G|,
    \end{align*}
    contradicting the assumption that $(g+S)\cap (G\setminus A)\neq \emptyset$ for any $g\in G$. So for any subset $A\subseteq G$ with $\Bigl\lfloor\frac{|S|-1}{|S|}|G|\Bigr\rfloor+1$ elements, we can find $g+S$ in $A$ for some $g\in G$, and thus $N_{G,\ S}\le \Bigl\lfloor\frac{|S|-1}{|S|}|G|\Bigr\rfloor+1$.
\end{proof}

We have a direct corollary.
\begin{corollary} \label{cor}
    If $S$ is a coset of some subgroup of $G$, then 
    \begin{align*}
        N_{G,\ S}=\frac{|S|-1}{|S|}|G|+1.
    \end{align*}
\end{corollary}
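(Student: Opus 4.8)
The plan is to deduce this directly from \Cref{thm1} by squeezing the lower and upper bounds together once we identify the stabilizer. First I would observe that if $S = a + K$ for some subgroup $K \le G$ and some $a \in G$, then $H_G(S) = K$: indeed, $g + S = S$ means $g + a + K = a + K$, i.e. $g + K = K$, which holds precisely when $g \in K$. In particular $|H_G(S)| = |K| = |S|$.

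Next I would invoke Lagrange's theorem: since $K$ is a subgroup of the finite group $G$, $|S| = |K|$ divides $|G|$. Therefore $\frac{|S|-1}{|S|}|G| = |G| - \frac{|G|}{|S|}$ is an integer, and the floor function in the upper bound of \Cref{thm1} acts trivially, giving $\bigl\lfloor \frac{|S|-1}{|S|}|G| \bigr\rfloor + 1 = \frac{|S|-1}{|S|}|G| + 1$.

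Finally, substituting $|H_G(S)| = |S|$ into the lower bound of \Cref{thm1} yields $\frac{|H_G(S)|-1}{|H_G(S)|}|G| + 1 = \frac{|S|-1}{|S|}|G| + 1$ as well. Hence both bounds coincide and $N_{G,\ S} = \frac{|S|-1}{|S|}|G| + 1$.

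I do not expect any genuine obstacle here; the only points requiring (minimal) care are the stabilizer computation $H_G(a+K) = K$ and the divisibility observation that lets us drop the floor. Everything else is an immediate appeal to \Cref{thm1}.
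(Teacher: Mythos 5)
Your proof is correct and follows essentially the same route as the paper: identify $H_G(S)=K$ so that $|H_G(S)|=|S|$, note the bound $\frac{|S|-1}{|S|}|G|$ is an integer by Lagrange, and conclude that the two bounds in Theorem \ref{thm1} coincide. You simply spell out the stabilizer computation and the divisibility point that the paper leaves implicit.
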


\begin{proof}
    If $S$ is a coset of a subgroup, then $|H_G(S)|=|S|$, and the equalities in Theorem \ref{thm1} will be attained.
\end{proof}

Let $T_G(S)$ be a transversal of $G/H_G(S)$, which means $T_G(S)$ contains exactly one element from each coset of $H_G(S)$, so $|T_G(S)|=\frac{|G|}{|H_G(S)|}$. For a subset $A\subseteq G$, it is easy to see that the following two statements are equivalent.
\begin{itemize}
    \item There exists $g\in G$, such that $g+S\subseteq A$.
    \item There exists $g'\in T_G(S)$, such that $g'+S\subseteq A$.
\end{itemize}

Using the probabilistic method, we prove another lower bound on $N_{G,\ S}$. We will use this result as a lemma to prove a better lower bound in Theorem \ref{thm2}, which is our final goal.

\begin{lemma} \label{lemma}
    We have
    \begin{align*}
        N_{G,\ S}\ge |T_G(S)|^{-1/|S|}|G|=|H_G(S)|^{1/|S|}|G|^{1-1/|S|}
    \end{align*}
\end{lemma}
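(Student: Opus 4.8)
The plan is to use the probabilistic \emph{deletion} (alteration) method. I would fix a parameter $p\in[0,1]$, to be optimized at the very end, and form a random subset $B\subseteq G$ by placing each element of $G$ into $B$ independently with probability $p$; thus $\mathbb E|B|=p|G|$. By the equivalence recorded just before the statement, $B$ contains a translate $g+S$ for some $g\in G$ if and only if it contains $g'+S$ for some $g'\in T_G(S)$, so I would let $Y$ denote the number of $g'\in T_G(S)$ with $g'+S\subseteq B$. Since $|g'+S|=|S|$ for every $g'$, linearity of expectation gives $\mathbb E Y=\sum_{g'\in T_G(S)}\Pr(g'+S\subseteq B)=|T_G(S)|\,p^{|S|}$.

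Next I would pass from $B$ to a subset $B'$ that avoids all translates, by deleting, for each $g'\in T_G(S)$ with $g'+S\subseteq B$, one element of $g'+S$ (say the least one in a fixed ordering of $G$, so that $B'$ is a function of $B$ alone). The resulting $B'$ contains no $g'+S$ with $g'\in T_G(S)$: for the bad $g'$ we deleted a point of $g'+S$, and for the remaining $g'$ already $g'+S\nsubseteq B$ while $B'\subseteq B$; hence $B'$ contains no translate of $S$ at all. Since we deleted at most $Y$ points, $|B'|\ge|B|-Y$ pointwise, so $\mathbb E|B'|\ge\mathbb E|B|-\mathbb E Y=p|G|-|T_G(S)|\,p^{|S|}$, and therefore some outcome $B'$ attains at least this value while avoiding every translate of $S$.

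The whole point is the choice $p=|T_G(S)|^{-1/|S|}$, which lies in $[0,1]$ because $|T_G(S)|\ge1$: then $|T_G(S)|\,p^{|S|}=1$, so the bound becomes $\mathbb E|B'|\ge|T_G(S)|^{-1/|S|}|G|-1$. Hence there is a set $B'$ with $|B'|\ge|T_G(S)|^{-1/|S|}|G|-1$ containing no translate of $S$, and as $|B'|$ is an integer this yields $|B'|\ge\lceil|T_G(S)|^{-1/|S|}|G|\rceil-1$. Finally I would invoke the definition of $N_{G,S}$ (an $M$-element set avoiding all translates forces $N_{G,S}\ge M+1$) to get $N_{G,S}\ge|B'|+1\ge|T_G(S)|^{-1/|S|}|G|$, and then substitute $|T_G(S)|=|G|/|H_G(S)|$ to rewrite the right-hand side as $|H_G(S)|^{1/|S|}|G|^{1-1/|S|}$. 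There is no genuine obstacle in this argument; the only care needed is the bookkeeping in this last paragraph — converting the real-valued estimate on $\mathbb E|B'|$ into the integer inequality for $N_{G,S}$ — together with a glance at the degenerate case $|T_G(S)|=1$ (i.e.\ $S=G$), where $p=1$, $B=G$, one point is deleted, and the bound reads $N_{G,S}\ge|G|$.
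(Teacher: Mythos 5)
Your argument is correct, but it is a genuinely different probabilistic route from the paper's. The paper draws $X$ uniformly at random from the $N$-element subsets of $G$ and applies a pure first-moment/union bound, $\mathbb{P}(\exists\, g:\ g+S\subseteq X)\le |T_G(S)|\binom{|G|-|S|}{N-|S|}/\binom{|G|}{N}\le |T_G(S)|(N/|G|)^{|S|}$, concluding by contradiction that no $N<|T_G(S)|^{-1/|S|}|G|$ can have the covering property. You instead use the deletion (alteration) method: independent sampling with probability $p$, removal of one point from each surviving translate, and the choice $p=|T_G(S)|^{-1/|S|}$ making $|T_G(S)|p^{|S|}=1$. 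Your bookkeeping is sound throughout: the pointwise bound $|B'|\ge |B|-Y$, the reduction to $g'\in T_G(S)$ via the equivalence stated before the lemma, the integrality step $\lceil c-1\rceil=\lceil c\rceil-1$, the inference that an avoiding $M$-set forces $N_{G,\,S}\ge M+1$, and the degenerate case $S=G$. As for what each approach buys: the paper's union bound is shorter and produces the threshold in one computation, at the price of the hypergeometric estimate and an argument by contradiction; your alteration argument is constructive in spirit, directly exhibiting a translate-avoiding set of size $\lceil |T_G(S)|^{-1/|S|}|G|\rceil-1$, which is precisely the form in which the lemma gets reused in the proof of Theorem \ref{thm2}, and it even yields the marginally sharper integer bound $N_{G,\,S}\ge\lceil |T_G(S)|^{-1/|S|}|G|\rceil$ (with further room to optimize over $p$), although the lemma as stated needs none of this extra strength.
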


\begin{proof}
    Suppose $N\ge |S|$ is an integer such that for any subset $A\subseteq G$ with $N$ elements, there exists $g\in G$ such that $g+S\subseteq A$. We randomly choose a set $X$ from all $N$-element subsets of $G$, then
    \begin{align*}
        \mathbb{P}(\exists\ g\in G\ s.t.\ g+S\subseteq X)&=\mathbb{P}(\exists\ g'\in T_G(S)\ s.t.\ g'+S\subseteq X)\\
        &\le \sum_{g'\in T_G(S)}\mathbb{P}(g'+S\subseteq X)\\
        &=|T_G(S)|\frac{{|G|-|S|\choose N-|S|}}{{|G|\choose N}}\\
        &\le |T_G(S)|\biggl(\frac{N}{|G|}\biggr)^{|S|}.
    \end{align*}
    
    If $N<|T_G(S)|^{-1/|S|}|G|$, then $\mathbb{P}(\exists\ g\in G\ s.t.\ g+S\subseteq X)<1$, which means there is some $N$-element set $A\subseteq G$ such that $g+S\nsubseteq A$ for any $g\in G$, contradiction. So $N\ge |T_G(S)|^{-1/|S|}|G|$, and thus $N_{G,\ S}\ge |T_G(S)|^{-1/|S|}|G|=|H_G(S)|^{1/|S|}|G|^{1-1/|S|}$.
\end{proof}

Combining the ideas in Theorem \ref{thm1} and Lemma \ref{lemma}, we prove the following result. In the proof, we take $G':=G/H_G(S)$ and $S':=S/H_G(S)$, then $H_{G'}(S')$ will be trivial, and by Lemma \ref{lemma}, we have $N_{G',\ S'}\ge |G'|^{1-1/|S'|}$. And because $N_{G',\ S'}$ is an integer, we know $N_{G',\ S'}\ge \bigl\lceil|G'|^{1-1/|S'|}\bigr\rceil$.

\begin{theorem} \label{thm2}
    We have
    \begin{align}
        N_{G,\ S}\ge \frac{|H_G(S)|-1}{|H_G(S)|}|G|+\Biggl\lceil\biggl(\frac{|G|}{|H_G(S)|}\biggr)^{1-|H_G(S)|/|S|}\Biggr\rceil.
    \end{align}
\end{theorem}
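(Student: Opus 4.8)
The plan is to push through the quotient reduction already indicated before the statement. Write $H:=H_G(S)$, $h:=|H|$, $G':=G/H$, $S':=S/H$, and let $\pi\colon G\to G'$ be the projection. Since $S$ is a union of cosets of $H$, for any $A\subseteq G$ a translate $g+S$ lies inside $A$ if and only if, for every coset $C\in S'$ (each such $C$ being a subset of $G$), the coset $g+C$ is entirely contained in $A$. Introduce $A':=\{D\in G':\ D\subseteq A\}$, the family of $H$-cosets lying fully in $A$. Passing to the quotient, the condition ``$g+S\subseteq A$'' becomes ``$\pi(g)+S'\subseteq A'$'', and since $\pi$ is surjective this yields the clean equivalence: $A$ contains a translate of $S$ $\iff$ $A'$ contains a translate of $S'$ in $G'$. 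Establishing this equivalence carefully is the first step.

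With it in hand, I would build a large $S$-avoiding subset of $G$ out of a large $S'$-avoiding subset of $G'$. Let $B'\subseteq G'$ be an $S'$-avoiding set of maximum size, namely $N_{G',S'}-1$ (that this maximum is attained is routine; note also that the side constraint $N\ge|S|$ in the definition of $N_{G,S}$ is automatically satisfied, since a set with fewer than $|S|$ elements cannot contain any translate of $S$). Define $B\subseteq G$ to consist of every element of each coset in $B'$ together with all but one element of every coset not in $B'$. Then the family of $H$-cosets lying fully in $B$ is exactly $B'$, so $B$ avoids $S$ by the equivalence, while
\begin{align*}
    |B|=h|B'|+(h-1)\bigl(|G'|-|B'|\bigr)=(h-1)|G'|+\bigl(N_{G',S'}-1\bigr).
\end{align*}
Hence $N_{G,S}\ge(h-1)|G'|+N_{G',S'}$. (The opposite inequality also holds, since any $S$-avoiding $A$ obeys $|A|\le h|A'|+(h-1)(|G'|-|A'|)$ with $A'$ an $S'$-avoiding set, so this is in fact an equality; but only ``$\ge$'' is needed here.)

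To finish, apply Lemma \ref{lemma} in $G'$: because $H_{G'}(S')$ is trivial we have $T_{G'}(S')=G'$, so the lemma gives $N_{G',S'}\ge|G'|^{1-1/|S'|}$, and integrality of $N_{G',S'}$ improves this to $N_{G',S'}\ge\bigl\lceil|G'|^{1-1/|S'|}\bigr\rceil$. Substituting $|G'|=|G|/h$ and $1/|S'|=h/|S|$ into $N_{G,S}\ge(h-1)|G'|+N_{G',S'}$ produces precisely the stated bound. I expect the main obstacle to be the bookkeeping of the first paragraph — correctly translating translates of $S$ in $G$ into translates of $S'$ in $G'$ via the ``full coset'' device — together with the verification that the greedy ``all but one element'' construction genuinely realizes $B'$ as its family of full cosets and is optimal given that family; everything past that point is substitution into Lemma \ref{lemma}.
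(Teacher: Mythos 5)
Your proposal is correct and takes essentially the same route as the paper: both pass to $G'=G/H_G(S)$, take an $S'$-avoiding set $B'$ there, and build the avoiding set in $G$ from the full cosets over $B'$ together with all-but-one element of every remaining coset, finishing with Lemma \ref{lemma} (with trivial stabilizer in $G'$) and integrality. The only cosmetic difference is that you isolate the intermediate inequality $N_{G,S}\ge(|H_G(S)|-1)|G'|+N_{G',S'}$ before invoking the lemma, whereas the paper plugs the lemma's bound directly into the construction.
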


\begin{proof}
    We shall construct a subset $B\subseteq G$ with $\frac{|H_G(S)|-1}{|H_G(S)|}|G|+\biggl\lceil\Bigl(\frac{|G|}{|H_G(S)|}\Bigr)^{1-|H_G(S)|/|S|}\biggr\rceil-1$ elements, and show that $g+S\nsubseteq B$ for any $g\in G$.

    By Lemma \ref{lemma}, we know that there is a subset $B'\subseteq G'$ with $\bigl\lceil|G'|^{1-1/|S'|}\bigr\rceil-1$ elements, such that $g'+S'\nsubseteq B'$ for any $g'\in G'$. We let 
    \begin{align*}
        B_1=\{b\in G:\ b+H_G(S)\in B'\},
    \end{align*}
    so
    \begin{align*}
        |B_1|=\bigl(\bigl\lceil|G'|^{1-1/|S'|}\bigr\rceil-1\bigr)|H_G(S)|.
    \end{align*}
    Then, there are $|G'|-|B'|$ cosets of $H_G(S)$ which are not in $B'$, we denote these cosets by $H_1,\ H_2,\ ...,\ H_{|G'|-|B'|}$. In each $H_i$, we randomly pick an element $h_i$, and let $K_i$ be $H_i\setminus\{h_i\}$. We let $B_2$ be the union of $K_i$, so
    \begin{align*}
        B_2=\bigcup_{i=1}^{|G'|-|B'|}K_i,
    \end{align*}
    and
    \begin{align*}
        |B_2|=(|G'|-|B'|)(|H_G(S)|-1)=\bigl(|G'|-\bigl(\bigl\lceil|G'|^{1-1/|S'|}\bigr\rceil-1\bigr)\bigr)(|H_G(S)|-1).
    \end{align*}
    Now, we take $B$ to be $B_1\cup B_2$, then
    \begin{align*}
        |B|&=|G'|(|H_G(S)|-1)+\bigl\lceil|G'|^{1-1/|S'|}\bigr\rceil-1 \\
        &=\frac{|H_G(S)|-1}{|H_G(S)|}|G|+\biggl\lceil\Bigl(\frac{|G|}{|H_G(S)|}\Bigr)^{1-|H_G(S)|/|S|}\biggr\rceil-1.
    \end{align*}
    And we need to show that $g+S\nsubseteq B$ for any $g\in G$. 
    \begin{itemize}
        \item If for some $g\in G$, we have $g+S\subseteq B_1$, then $g':=g+H_G(S)\in G'$ and $g'+S'\subseteq B'$, contradicting the definition of $B'$.
        \item If for some $g\in G$, we have $g+S\subseteq B$ and $(g+S)\cap B_2\neq \emptyset$, then again we have a contradiction, because $g+S$ is a union of $H_G(S)$ cosets, but $B_2$ is a union of $H_G(S)$ cosets with punched holes.
    \end{itemize}
    So $g+S\nsubseteq B$ for any $g\in G$.
\end{proof}

We need to check the lower bound obtained in Theorem \ref{thm2} is better than the one in Lemma \ref{lemma}. Although this is intuitive, we have a formal verification given by the following proposition, where $g$, $h$, and $s$ play the roles of $|G|$, $|H_G(S)|$, and $|S|$ respectively.

\begin{proposition}
    Let $g,\ h,\ s\ge 1$ be three real numbers with $g\ge h$, then 
    \begin{align*}
        \frac{h-1}{h}g+\Bigl(\frac{g}{h}\Bigr)^{1-h/s}\ge h^{1/s}g^{1-1/s}.
    \end{align*}
\end{proposition}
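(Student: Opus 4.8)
The plan is to collapse this two-parameter inequality into a one-variable statement by two substitutions, and then finish with a single derivative computation. First I would normalize by setting $x := g/h$, so that $x \ge 1$ by the hypothesis $g \ge h$. Writing $g = hx$ and using $h^{1/s} g^{1-1/s} = h\cdot x^{1-1/s}$ on the right-hand side, the claim becomes
\begin{align*}
(h-1)x + x^{1-h/s} \ge h\, x^{1-1/s}.
\end{align*}
Next I would set $t := x^{1/s} \ge 1$, so that $x = t^s$, $x^{1-1/s} = t^{s-1}$, and $x^{1-h/s} = t^{s-h}$. Dividing through by the positive quantity $t^{s-h}$, the inequality is equivalent to
\begin{align*}
(h-1)t^h - h\, t^{h-1} + 1 \ge 0 \qquad (t \ge 1,\ h \ge 1).
\end{align*}

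To prove this reduced inequality I would introduce $f(t) := (h-1)t^h - h\, t^{h-1} + 1$ and record that $f(1) = (h-1) - h + 1 = 0$. Differentiating gives $f'(t) = h(h-1)t^{h-2}(t-1)$, and for $t \ge 1$ and $h \ge 1$ every factor ($h$, $h-1$, $t^{h-2}$, $t-1$) is nonnegative, so $f' \ge 0$ on $[1,\infty)$. Hence $f$ is nondecreasing there and $f(t) \ge f(1) = 0$, which is exactly what is needed. Tracing the substitutions backward recovers the original inequality, and one sees that equality holds precisely when $t = 1$, i.e. $g = h$, or when $h = 1$.

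I do not expect a serious obstacle here: the real content is noticing that the two substitutions turn every term into a power of a single variable $t$, after which $f'$ factors cleanly. The only minor point of care is that the exponent $h-2$ can be negative when $1 \le h < 2$, but $t^{h-2}$ is still positive for $t > 0$, so the sign analysis of $f'$ is unaffected; the degenerate case $h = 1$ (where $f \equiv 0$) is likewise automatically covered.
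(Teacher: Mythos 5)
Your proof is correct, but it follows a genuinely different route from the paper. You normalize twice, first $x=g/h\ge 1$ and then $t=x^{1/s}\ge 1$, reducing the statement to the one-variable inequality $(h-1)t^h-ht^{h-1}+1\ge 0$ for $t\ge 1$, $h\ge 1$, which you settle by noting the function vanishes at $t=1$ and has derivative $h(h-1)t^{h-2}(t-1)\ge 0$; all steps check out, including the harmless division by $t^{s-h}>0$ even when $s<h$. The paper instead fixes $g$ and $h$, treats $s$ as the variable, defines $f(s)=\frac{h-1}{h}g+(g/h)^{1-h/s}-h^{1/s}g^{1-1/s}$, asserts without computation that $f'(s)\le 0$, and concludes from the limit $f(s)\to 0$ as $s\to\infty$ that $f\ge 0$ on $[1,\infty)$. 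Your version buys several things: the derivative factors transparently (whereas the paper's ``it turns out $f'(s)\le 0$'' is left unverified and is the genuinely delicate step there), no limiting argument at $s=\infty$ is needed, the reduced inequality is recognizable as weighted AM--GM applied to $t^{h-1}=(t^h)^{(h-1)/h}\cdot 1^{1/h}$, and you get the equality cases ($g=h$ or $h=1$) for free. The paper's parametrization has the minor conceptual advantage of directly exhibiting the bound of Theorem \ref{thm2} as an improvement over Lemma \ref{lemma} that degrades monotonically in $s$, but as a proof of the proposition itself your argument is the more complete and elementary one.
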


\begin{proof}
    We can fix $g$ and $h$, and take $s$ as a variable. Note that actually we should have $h\le s\le g$, but for calculation convenience, let us take $1\le s<\infty$. Let $f(s)=\frac{h-1}{h}g+(\frac{g}{h})^{1-h/s}-h^{1/s}g^{1-1/s}$. It turns out $f'(s)\le 0$, so $f(s)$ is decreasing on $[1,\ \infty)$. And if $s$ is taken to be $\infty$, then $f(\infty)=0$. So we always have $f(s)\ge 0$, and thus $\frac{h-1}{h}g+\bigl(\frac{g}{h}\bigr)^{1-h/s}\ge h^{1/s}g^{1-1/s}$.
\end{proof}

Note that if $S$ is a coset of some subgroup of $G$, then the lower bound in Theorem \ref{thm2} is the same as the one in Theorem \ref{thm1}; if $|H_G(S)|=1$ or $H_G(S)=S=G$, then the lower bound in Theorem \ref{thm2} is the same as the one in Lemma \ref{lemma}.

We finish this short paper with an example.

Let us take $G$ to be $C_{2024}$, the cyclic group of order $2024=2^3\cdot 11\cdot 23$, and take $S$ to be the union of $n$ cosets of the subgroup of order eight. So $|S|=8n$, and if we restrict $n$ to be in $[1,\ 10]$, then $|H_G(S)|$ must be eight. Then by Theorem \ref{thm1}, Corollary \ref{cor}, and Theorem \ref{thm2}, we have
    \[ N_{G,\ S}\begin{cases} 
          =1772 & if\ n=1, \\
          \in [1787,\ 1898] & if\ n=2, \\
          \in [1812,\ 1940] & if\ n=3, \\
          \in [1835,\ 1961] & if\ n=4, \\
          \in [1855,\ 1974] & if\ n=5, \\
          \in [1872,\ 1982] & if\ n=6, \\
          \in [1886,\ 1988] & if\ n=7, \\
          \in [1898,\ 1993] & if\ n=8, \\
          \in [1908,\ 1996] & if\ n=9, \\
          \in [1917,\ 1999] & if\ n=10.
       \end{cases}
    \]

\end{document}